\newtheorem{teo}{Theorem}[section]
\newtheorem{lem}[teo]{Lemma}
\newtheorem{assu}[teo]{Assumption}
\declaretheoremstyle[
  spaceabove=\topsep, spacebelow=\topsep,
  headfont=\bf,  
  notefont=\mdseries, notebraces={(}{)},
  bodyfont=\rmfamily, 
  postheadspace=1em,
  qed=$\Diamond$
]{drem}
\declaretheorem[style=drem, name=Remark, numberlike=teo]{rmk}
\newcommand{\ie}[0]{\emph{i.e.} }
\newcommand{\srl}[1]{\overline{#1}}
\DeclareFontFamily{T1}{mafra}{}
\DeclareFontShape{T1}{mafra}{m}{n}{<->s*[0.95]yswab}{} 
\DeclareFontShape{T1}{mafra}{m}{it}{<->s*[1.0]ygoth}{} 
\DeclareTextFontCommand{\textgoth}{\yfrak}
\DeclareSymbolFont{mafrak}{T1}{mafra}{m}{n}
\DeclareSymbolFontAlphabet{\mathfr}{mafrak}
\DeclareSymbolFont{mbbold}{U}{bbold}{m}{n}
\DeclareSymbolFontAlphabet{\mathbbold}{mbbold}
\newcommand{\xvc}[1]{\overrightarrow{#1}}
\newcommand{\eps}[0]{\varepsilon}
\newcommand{\rr}[0]{\ensuremath{\mathbb{R}}}
\newcommand{\zz}[0]{\ensuremath{\mathbb{Z}}}
\begin{document}

\renewcommand{\thefootnote}{\fnsymbol{footnote}}

\renewcommand{\thefootnote}{\arabic{footnote}}

\newcommand{\ud}{\tfrac{1}{2}}
\newcommand{\ut}{\tfrac{1}{3}}
\newcommand{\uq}{\tfrac{1}{4}}

\newcommand{\lmd}{\lambda}
\newcommand{\Lmd}{\Lambda}
\newcommand{\Gm}{\Gamma}
\newcommand{\gm}{\gamma}
\newcommand{\GM}{\Gamma}

\newcommand{\bsl}\backslash
\newcommand{\acts}{\curvearrowright}
\newcommand{\donc}{\rightsquigarrow}

\newcommand{\smdd}[4]{\big( \begin{smallmatrix}#1 & #2 \\ #3 & #4\end{smallmatrix} \big)}

\newcommand{\sgn}{\textrm{sgn}\,}

\begin{center}
\Large Balancing non-rectangular tables
\vspace*{1cm}

\centerline{\large Antoine Gournay} 
\end{center}

\vspace*{1cm}

\centerline{\textsc{Abstract}}

\begin{center}
\parbox{10cm}{{ \small 
Balancing square and rectangular tables by rotation has been a interesting way to illustrate the intermediate value theorem. 
The aim of this note is to show that the balancing act but with non-rectangular tables can be a nice application of the ergodic theorem (or more generally, invariant measures).

\hspace*{.1ex} 
}}
\end{center}

\section{Introduction}\label{s-intro}

If you have tried to set up a pick-nick table or used a stepladder, then you are certainly aware how of the wobbly table problem. 
How can you make this object stop to wobble (without using extra tools)? 
Well, if you have a square or even rectangular table, then you are in luck. It turns out that rotating the table along some axis will suffice to eventually stabilise it. Unfortunately, many offices are now full of tables which have a trapezoidal shape. What now?

Unfortunately, the arguments for square or rectangular table hinge crucially on the symmetry of the object. This note will hopefully convince you, that you should have no fear of buying tables which are less symmetric.

Here is a more detailed description of this problem, which was originally made public in \cite{Ga1} and \cite{Ga2}. 
You have some table (or other object) with 4 legs and you find yourself on some terrain. Find conditions on the legs and quite unrestrictive conditions on the terrain, so that turning the table around some axis can make it stop to wobble. Note that the table may not be level, it will just stop to wobble.

The original arguments of \cite{Ga1} and \cite{Ga2} cover square tables on a continuous terrain, but is presented in an abstract setting which overlooks the rigidity of the problem. 
A more realistic setup is covered in \cite{BLPR} and \cite{Mar}. The terrain needs to be Lipschitz (with a Lipschitz constant bounded above). One of the very first work which relates to this problem is \cite{Dys}; the reader is directed to \cite{BLPR} for an extensive historical overview.

The main result of this note is to discuss an extension to cover cyclic quadrilaterals (a conjecture raised in \cite{Mar}). The main narrative of the proof is presented in \S{}2. However, much like the first presentations of this problem in \cite{Ga1} and \cite{Ga2},  this narrative overlooks some technicalities. A closer look at the problem is then discussed in \S{}3. The complete hypothesis are as follows:\\
$-$ \; the quadrilateral formed by the ends of legs is cyclic \\
$-$ \; its diagonals are of equal length. \\
$-$ \;  the angle (from the centre of the excircle) which support the diagonal is rational\\
$-$ \;  the angle which brings one diagonal on the other is also rational.\\
Then the table can be stabilised by rotating. (As in \cite{Mar} or \cite{BLPR}, the terrain needs to be Lipschitz.)

A typical example of a non-rectangular table satisfying the conditions of the theorem which the reader might have in his office has the following shape: take an hexagon and cut along two opposite vertices.
The resulting quadrilateral is a symmetric trapezoid (with 3 equal sides).

\section{Main steps of the proof}

\subsection{Preliminaries}

Let us start by some simple results, reductions, assumptions and notations:
\begin{itemize}
 \item First off, it seems the very least to demand that your table be stable if the terrain is perfectly flat. Hence the ends of the legs must be on the same plane $P$. The end of the legs will be denoted $A$, $B$, $C$ and $D$.
 \item The axis around which you rotate should be perpendicular to the plane $P$ (you don't want to turn your table upside down!).
 \item Also the legs should all run along the same circle (\emph{i.e.} the quadrilateral $ABCD$ is cyclic) otherwise stabilisation by rotation is not possible (the terrain could be high along the circle described by one leg and low along the circle described by another leg). This is a necessary condition raised in \cite{Mar}. $Z$ will denote the centre of this circle.
 \item This note stays in a fairly idealised setup, Physical problems, such as ``the terrain is going through the [legs of the] table'', ``the table will turn over since its centre of mass is ill-placed'' or ``the legs have a thickness'', will be ignored. 
 \item The whole problem boils is about the four feet of the table. The actual shape of the table is not really important.
 \item The table starts in the air. There is some basic position corresponding to the angle $0°$. This means that, in this basic position, the leg $A$ makes an angle of $0°$ as seem from the excentre $Z$. The angles $\theta_B$,$\theta_C$ and $\theta_D$ are the angles which make the other legs when in this basic position. (For  convenience $\theta_A =0$.)
 \item As you turn your table, all the angles of the legs are changed by the same number. (The table is rigid after all.)
\end{itemize}

\subsection{Putting the table down}

The next step is to look at possible touchdowns for your table. 
As you rotate the table consider the following way to put it down. 
First, you could put the legs $A$ and $C$ on the ground (and completely ignore where $B$ and $D$ are). 
If $B$ and $D$ are above the terrain, then the table wobbles, if not, then this was not completely legal, but it is still useful to think about it (as some sort of negative wobbling). \cite{BLPR} discusses the touchdown much more carefully.

Next, you could do the same thing with $B$ and $D$ touching down first.
Note that these touchdowns should happen on the same curve (some distorted circle), otherwise there is no hope to conclude. 
Assume for now that this is the case, see \cite{BLPR} and \S{}\ref{stouchdown} for a correct description of this process

Consider $X$ to be the intersection point of the diagonals $AC$ and $BD$. If we make a touchdown with $AC$, the coordinates of $X$ do not change when the table wobbles. 
Likewise with $BD$. 
This gives us two height functions: $h_{AC}(\theta)$ is the height ($z$-coordinate) of $X$ after a $AC$-touchdown, where $\theta$ is an angle of rotation (of the table). 
And likewise for $h_{BD}$.

\begin{lem}\label{lelem}
If the table wobbles then $h_{BD}(\theta) \neq h_{AC}(\theta)$.
\end{lem}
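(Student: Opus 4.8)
The plan is to prove the contrapositive: if $h_{AC}(\theta)=h_{BD}(\theta)$, then the table does not wobble at $\theta$, i.e.\ there is a placement (at rotation $\theta$) resting on all four feet. I would argue at the level of this section's narrative, under the idealisation that at rotation $\theta$ the foot of leg $F\in\{A,B,C,D\}$ sits over a fixed point $p_F$ of the terrain, which is the graph of a function $f$. A placement of the (rigid, nearly horizontal) table then amounts to a choice of affine function $\ell$ on the horizontal plane -- the plane through its four feet -- with leg $F$ resting on the terrain exactly when $\ell(p_F)=f(p_F)$. The points $p_A,p_B,p_C,p_D$ form a convex quadrilateral (a copy of $ABCD$) whose diagonals meet at the horizontal position $p_X$ of $X$; I fix $\lambda,\mu\in(0,1)$ with $p_X=\lambda p_A+(1-\lambda)p_C=\mu p_B+(1-\mu)p_D$, so that $h_{AC}(\theta)=\lambda f(p_A)+(1-\lambda)f(p_C)$ and $h_{BD}(\theta)=\mu f(p_B)+(1-\mu)f(p_D)$.

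I would then put the table on legs $A$ and $C$, so $\ell(p_A)=f(p_A)$ and $\ell(p_C)=f(p_C)$. Such an affine $\ell$ is still free in one parameter -- the wobble, i.e.\ the rock about the diagonal $AC$ -- but affinity forces $\ell(p_X)=\lambda f(p_A)+(1-\lambda)f(p_C)=h_{AC}(\theta)$ whatever that parameter is; this is precisely the observation that the coordinates of $X$ do not move under wobbling. I then spend the free parameter to bring leg $B$ onto the terrain too, $\ell(p_B)=f(p_B)$; since $p_A,p_B,p_C$ are not collinear this pins $\ell$ down. It remains to locate leg $D$: affinity along the other diagonal gives $\ell(p_X)=\mu\ell(p_B)+(1-\mu)\ell(p_D)$, so $h_{AC}(\theta)=\mu f(p_B)+(1-\mu)\ell(p_D)$ and hence $\ell(p_D)=\bigl(h_{AC}(\theta)-\mu f(p_B)\bigr)/(1-\mu)$. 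Thus leg $D$ too rests on the terrain, $\ell(p_D)=f(p_D)$, exactly when $h_{AC}(\theta)=\mu f(p_B)+(1-\mu)f(p_D)=h_{BD}(\theta)$. Under the hypothesis this holds, so all four legs are down and the table does not wobble -- which is the contrapositive. (The reverse implication is immediate: a four-leg rest is at once an $AC$- and a $BD$-touchdown, so it forces $h_{AC}(\theta)=h_{BD}(\theta)$; hence $h_{AC}(\theta)=h_{BD}(\theta)$ is in fact equivalent to stability at angle $\theta$.)

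I expect the delicate point to be not this one-line identity but the idealisation beneath it. When the terrain is only Lipschitz, a leg does not land over its nominal point $p_F$, the plane of the feet is only approximately the graph of an affine function, and even the meaning of ``the'' $AC$-touchdown -- which rock is selected, where the feet actually come down -- needs care. Making this precise, in particular checking that the rock bringing leg $B$ down exists and is unique by a Bolzano-type argument as in \cite{BLPR}, and that $\theta\mapsto\bigl(h_{AC}(\theta),h_{BD}(\theta)\bigr)$ is then well defined, is the work of \S{}3; the affine relation tying the four feet together through the point $X$ is the skeleton that survives.
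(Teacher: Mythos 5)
Your proposal is correct within the idealisation this section works in, but it runs in the opposite direction from the paper and by a different mechanism. The paper proves the implication directly and dynamically in two sentences: if the table wobbles, then one of the pairs $AC$ or $BD$ can be pushed strictly further down, and since the height of $X$ decreases as a diagonal is pushed down, the two touchdown heights of $X$ must differ. You instead prove the contrapositive statically: equality $h_{AC}(\theta)=h_{BD}(\theta)$ is exactly the coplanarity condition for the four lifted feet, read off from the two affine interpolations meeting at $X$, and coplanarity hands you an explicit four-leg placement. Your version buys more -- it actually establishes the equivalence of stability with $h_{AC}=h_{BD}$, which is what Theorem \ref{leteo} ultimately uses, and it isolates the affine identity through $X$ as the real content. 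The paper's version buys less but makes visible the hypothesis it secretly needs, namely the transversality condition (the $z$-coordinate of $X$ strictly decreasing while the table is let down) that \S{}\ref{stouchdown2} later flags as necessary for this lemma; your argument trades that for the requirement that the constructed coplanar placement coincide with ``the'' $AC$- and $BD$-touchdowns, i.e.\ for the well-definedness and uniqueness issues you correctly defer to \S{}3 and to \cite{BLPR}. Both proofs are sound at the level of rigour this section intends.
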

\begin{proof}
If the table wobbles then one of the pair $BD$ or $AC$ can be pushed further down. As a consequence, the height of the centre will differ.
\end{proof}

The coordinates of $X$ are a convex combinations of the coordinates of the legs. More precisely, it's a small exercise with vectors that if $\tau \frac{ \srl{CX}}{ \srl{CA}}\in ]0,1[$ and $\mu = \frac{ \srl{DX}}{ \srl{DB}}\in ]0,1[$, then
\[
\xvc{OX} = \tau \xvc{OA} + (1-\tau) \xvc{OC} 
= \mu \xvc{OB} + (1-\mu) \xvc{OD}
\]
In particular, this holds for the height coordinate:
\[
h_{AC}(\theta) = \tau h_A(\theta) + (1-\tau) h_C(\theta)
\qquad 
\text{and}
\qquad
h_{BD}(\theta) = \mu h_B(\theta) + (1-\mu) h_D(\theta)
\]
where $h_E(\theta)$ gives the height of (the end of) leg $E$ after a touchdown after a rotation [of the table] by the angle $\theta$ and $E \in \{A,B,C,D\}$. 
Since every leg can be brought at the position of the other leg by a rotation, the functions $h_A$, $h_B$, $h_C$ and $h_D$ are all identical up to a translation (see \S{}\ref{stouchdown} and \S{}\ref{stouchdown2} for details).

\subsection{Table turning}

\begin{teo}\label{leteo}
Assume $h_E$ are continuous, then there is an angle $\theta$ so that the table does not wobble.
\end{teo}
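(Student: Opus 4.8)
The plan is to study the function $g(\theta) = h_{AC}(\theta) - h_{BD}(\theta)$ and show it must vanish somewhere; by Lemma~\ref{lelem}, a zero of $g$ is exactly an angle at which the table does not wobble. Since all the $h_E$ are continuous and $h_{AC}, h_{BD}$ are convex combinations of them, $g$ is continuous, so it suffices to exhibit two angles where $g$ takes values of opposite sign (or where it vanishes outright). The naive approach—used for the square table—is to rotate by the angle that swaps the diagonal $AC$ with $BD$, which would send $g$ to $-g$ up to reindexing and let the intermediate value theorem finish. First I would write out carefully what rotation does to the four height functions: since each leg can be carried onto any other by a rotation through the appropriate difference of the angles $\theta_A=0, \theta_B, \theta_C, \theta_D$, the functions $h_A, h_B, h_C, h_D$ are translates of a single periodic function $h$, namely $h_E(\theta) = h(\theta + \theta_E)$.

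The difficulty that the introduction flags is precisely that for a non-rectangular cyclic quadrilateral there is in general \emph{no} single rotation that swaps the two diagonals and simultaneously respects the barycentric weights $\tau$ and $\mu$, so the one-step IVT argument breaks. The hypotheses of the theorem (equal diagonals, and the two relevant central angles being rational multiples of a full turn) are there exactly to repair this: rotating by the rational angle that carries one diagonal's chord onto the other generates, under iteration, a finite cyclic group of rotations, and the orbit of $g$ under this group is a finite family of translates of $g$ whose (weighted) average is forced to be constant. This is where the "ergodic theorem / invariant measure" heuristic of the abstract enters: averaging $g\big(\theta + k\alpha\big)$ over the finite orbit $k = 0, 1, \dots, n-1$ produces a function invariant under rotation by $\alpha$, hence constant on the orbit; and the equal-diagonal hypothesis ensures this constant is actually $0$, because swapping the two diagonals of equal length interchanges the roles of $h_{AC}$ and $h_{BD}$ and thus negates $g$. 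A function whose orbit-average is $0$ but which is continuous must take both signs (or vanish), and the intermediate value theorem then yields the desired $\theta$.

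Concretely, the steps I would carry out are: (1) record $h_E(\theta) = h(\theta+\theta_E)$ for a common continuous periodic $h$, and hence $g(\theta) = \tau h(\theta+\theta_A) + (1-\tau)h(\theta+\theta_C) - \mu h(\theta+\theta_B) - (1-\mu)h(\theta+\theta_D)$; (2) use the cyclic + equal-diagonal hypotheses to identify the rotation $R_\alpha$ (by a rational angle $\alpha = 2\pi p/q$) that exchanges the chords $AC$ and $BD$, checking that under $R_\alpha$ one has $g \circ R_\alpha = -g$ up to the symmetry, so that summing $g$ over the orbit of size dividing $q$ telescopes to $0$; (3) conclude that $\sum_{k} g(\theta + k\alpha) \equiv 0$, so either some term vanishes (done) or the terms are not all of one sign, whence by continuity and the intermediate value theorem $g$ has a zero; (4) invoke Lemma~\ref{lelem} to translate "$g(\theta)=0$" into "the table does not wobble at angle $\theta$". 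The rationality of the second angle (the one "supporting the diagonals") is what guarantees the orbit is finite so that the sum is a genuine finite average rather than an integral one would have to make sense of by other means.

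The main obstacle, and the part I expect to require the most care, is step (2): verifying that the geometric hypotheses really do produce a rotation exchanging the two diagonals in a way that is compatible with the barycentric coordinates, i.e. that after the swap the weight attached to the chord $AC$ becomes exactly the weight that was attached to $BD$. If the diagonals are equal and the quadrilateral is cyclic, the intersection point $X$ divides them in matching ratios, so one expects $\tau$ and $\mu$ to be swapped rather than mangled, but pinning this down—and handling the bookkeeping of \emph{which} rational angle does the job and why its order is finite—is the delicate point, and it is exactly the "technicality" the author defers to \S{}3.
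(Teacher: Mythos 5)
Your proof is correct in outline but takes a genuinely different route from the paper's. The paper argues by contradiction: if the table always wobbles, then by Lemma \ref{lelem} and continuity the function $h_\Delta = h_{BD}-h_{AC}$ is bounded away from $0$ on one side, say $h_\Delta > \eps$ everywhere; it then evaluates $\lim_{N\to\infty}\frac{1}{N}\sum_{i=1}^{N} h_\Delta(i\cdot\theta_0)$ for an irrational angle $\theta_0$ via the equidistribution/ergodic theorem, obtaining $\int h_\Delta = \mu H + (1-\mu)H - \tau H - (1-\tau)H = 0$, because all four $h_E$, being translates of one common function, share the same mean $H$. The only structural inputs are that the weights on each diagonal sum to $1$ and that the $h_E$ are translates of one another; no relation between $\tau$ and $\mu$ and no rationality is used inside that proof (those assumptions are invoked only in \S{}3, to make the $h_E$ well defined, continuous and translation-related). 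Your argument instead replaces the integral over the whole circle by a finite average over the orbit of the rational rotation $R_\alpha$ carrying $AC$ to $BD$, and closes with the intermediate value theorem rather than a contradiction. This buys elementarity --- no equidistribution theorem is needed, only a finite sum and connectedness of the circle --- but it costs a stronger structural identity: you need $h_{BD}(\theta) = h_{AC}(\theta+\alpha)$, which requires both $\theta_D - \theta_B = \theta_C - \theta_A$ and $\tau = \mu$ (this does follow from Assumption \ref{ass-len}, since the intersecting-chords relation $XA\cdot XC = XB\cdot XD$ together with equal diagonals forces the two ratios to agree up to relabelling $B \leftrightarrow D$), plus the rationality of $\alpha$ (Assumption \ref{ass-rot2}) to make the orbit finite.

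One imprecision in your step (2): the identity you actually obtain is not $g\circ R_\alpha = -g$ but $g = h_{AC} - h_{AC}\circ R_\alpha$, i.e.\ $g$ is a coboundary; it is this form, not a sign flip, that makes $\sum_{k=0}^{n-1} g(\theta + k\alpha)$ telescope to $h_{AC}(\theta) - h_{AC}(\theta + n\alpha) = 0$. (A genuine relation $g\circ R_\alpha = -g$ would kill the orbit sum only when the orbit has even length.) With that correction, and with the deferred verification that the weights match, your argument goes through.
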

\begin{proof}
Since $h_E$ is measurable, let $\int h_E = H$ (the average height).
By assumption $h_E$ is continuous, hence so are $h_{BD}$ and $h_{AC}$. 
Let $h_\Delta = h_{BD} - h_{AC}$.

Assume the table cannot be stabilised, then, by Lemma \ref{lelem}, $h_\Delta$ is never 0. Since $h_\Delta$ is continuous, there is an $\eps>0$ so that, 
\[
\text{either }
\qquad 
\forall \theta,\; h_{\Delta}(\theta)  > \eps
\qquad 
\text{or }
\qquad
\forall \theta,\; h_{\Delta}(\theta)  < -\eps
\]
Without loss of generality, we may assume the first holds.
Let $\theta_0$ be an irrational angle. 
Then 
\[
\forall N, \quad \frac{1}{N} \sum_{i=1}^N h(i \cdot \theta_0) > \eps
\]
On the other hand
\[
\frac{1}{N} \sum_{i=1}^N h(i \cdot \theta_0)
=\frac{1}{N}  \sum_{i=1}^N \bigg(
 \mu   h_B(i \cdot \theta_0) +
 (1-\mu)   h_D(i \cdot \theta_0)  -
 \tau  h_A(i \cdot \theta_0) -
 (1-\tau)  h_C(i \cdot \theta_0)
 \bigg)
\]
By the equidistribution theorem or the ergodic theorem (see, among many possibilities \cite[Appendix 1]{AA}, \cite[Section 23.10]{HW}, \cite[Exercise 2.2.12]{Nav} and \cite{Z})
\[
 \lim_{N \to \infty} \frac{1}{N} \sum_{i=1}^N h_\Delta(i \cdot \theta_0)
 = 
 \int h_\Delta = 
 \mu \int  h_B +
 (1-\mu) \int  h_D -
  \tau \int h_A -
 (1-\tau) \int h_C 
\]
But the right-hand side is just 
\[
\mu H + (1-\mu)H - \tau H + (1-\tau)H = H - H = 0.
\]
Hence
\[
 \lim_{N \to \infty} \frac{1}{N} \sum_{i=1}^N h_\Delta(i \cdot \theta_0)
  = 0 > \eps
\]
a contradiction.
\end{proof}

\section{A closer look}\label{slook}

\subsection{Equal hovering position}\label{stouchdown}

A tool from \cite[\S{}3]{BLPR} is to look only at the touchdown with respect to $AC$ and then consider how far the vertices $BD$ are in the air. This is done so that both vertices $B$ and $D$ are equally far from the ground and is called the ``equal hovering position''. 

Let us sketch how to apply Theorem \ref{leteo} in this context. 
Consider the average (over all angles) of $h_{AC}$, $\int h_{AC}$.
Since the average of $h_{BD}$ is equal to that of $h_{AC}$, the equal hovering position is on average 0.
But the equal hovering position is continuously defined, hence if the average is $0$, it should be 0 somewhere.
When the equal hovering position is 0, then the table does not wobble. 

However there is a hidden hypothesis in this argument. Namely, if the two diagonals do not have the same length, then it could be that $h_{BD}$ takes on different value than $h_{AC}$. Variations in the steepness of terrain could make the pair of vertices $B$ and $D$ linger different time in different regions. This leads to the following 
\begin{assu}\label{ass-len}
The length of the diagonals of $ABCD$ are equal.
\end{assu}

\subsection{Further assumptions}\label{stouchdown2}

As mentioned before, \cite{BLPR} contains a detailed description how to realise the touchdown of two opposite vertices ($AC$ or $BD$).
The focus of this section is to point out what the proof of Theorem \ref{leteo} requires. The various height functions $h_A$, $h_B$, $h_C$ and $h_D$ need:
\begin{enumerate}\renewcommand{\theenumi}{\bfseries (H\arabic{enumi})}
 \item to be well-defined
 \item to be continuous
 \item to differ only by a translation, \emph{i.e.} $h_E(\theta) = h_A(\theta + \theta_E)$
\end{enumerate}
Before looking closer at this, let's point the problem out: while letting the table down, you will need to turn the table according to other axes. This means there is \emph{a priori} an uncertainty in how you put the table down. As mentioned in \cite{Mar} these constructions are highly non-unique.

For example, this plays an implicit role in Lemma \ref{lelem}. Indeed, on needs to assume that, for any given angle $\theta$, the touchdown according to $AC$ and $BD$ are done in one go. So you cannot let the table down and see where $AC$ end up, and then let the table down and see where $BD$ end up. You need to let the table down, see which of the two first come up and then go on to the second.

In short, for the proof of Lemma \ref{lelem}, a transversality condition is probably necessary, namely that the $z$-coordinate of $X$ is decreasing while letting the table down. 
This means that for a given rotation angle $\theta$ the touchdown of both diagonals must be taken into account simultaneously.
To make sure this is possible requires most certainly three additional assumptions.

The first of these assumptions, is that for transversality, the terrain needs to be a $C^1$-function with some upper bound on the first derivative. 
Lipschitz-continuity (with an upper bound on the Lipschitz constant) is however sufficient, see \cite{BLPR} and \cite{Mar} for details.

The second assumption comes form the fact that the height functions $h_E$ should only depend on the angle (and on $\theta_E$). The touchdown $h_{BD}(0)$ needs to be defined simultaneously with the touchdown $h_{AC}(0)$. But then, $h_{AC}(\theta_B)$, $h_{AC}(\theta_C)$ and $h_{AC}(\theta_D)$ are also defined (since the leg $A$ will land where the legs $B$, $C$ and $D$ did). 

Repeating this process, one sees that all angles $k_1 \theta_B + k_2 \theta_C + k_3 \theta_D$ (where $k_1,k_2$ and $k_3 \in \zz_{\geq 0}$) need to be defined at once. 
In order to avoid an infinite number of choices, this leads to the following two assumptions:
\begin{assu}\label{ass-rot2}
The rotation bringing one diagonal on the other diagonal is rational.
\end{assu}
\begin{assu}\label{ass-rot}
The rotation bringing one end of a diagonal on the other end is rational (\ie the angle supporting the diagonals from the excircle is rational).
\end{assu}

Lastly, note that Assumption \ref{ass-len} is particularly important for condition (H3) mentioned above.

Note that Assumptions \ref{ass-len} and \ref{ass-rot} are both automatically verified in the case of a rectangle (diagonals of equal length is a characteristic feature of rectangles among parallelogram; the rotation is 180\textdegree{} since the excentre lies on the diagonal).
Assumption \ref{ass-rot2} might however not hold

\begin{rmk}
One might be able to get rid of the rationality assumptions. 
A standard way to do so would be to consider a sequence of tables with rational angles which tend to the desired (irrational) table. 
Since everything is happening over a compact part of $\rr^2$, the stabilising positions of the rational tables will converge to (at least) one position.
This position should stabilise the (irrational) table.
\end{rmk}
 
\subsection{Invariant measures}\label{sinvmeas}
 
Note that the proof relied on a very specific invariant measure: the uniform measure, which is invariant under translations.

Perhaps one way to get rid of Assumption \ref{ass-len} would be to consider a sequence $\theta_i$ which tends to some well-chosen measure, rather than just taking this sequence of angles to be multiples of an irrational angle ($i \cdot \theta_0$ in the proof).

More concretely, let $h_B(\theta) = h_A(\theta + f_B(\theta))$, $h_C(\theta) = h_A(\theta + f_C(\theta))$ and $h_D(\theta) = h_A(\theta + f_D(\theta))$. The functions $x \mapsto x + f_E(x)$ (where $E \in \{B,C,D\}$) generate a monoid (of circle maps). If the action of this monoid is amenable, then there is a invariant measure $\mu$ (see \cite{Gr} for generic background on amenability and \cite[\S{}2.3 and \S{}3.2]{Nav} for more specific details in the case of groups acting on the circle). Since The convex hull of Dirac masses is weak$^*$ dense in the space of means, one can the choose the sequence of angles $\theta_i$ so that $\tfrac{1}{N}\sum_{i=1}^N \delta_{\theta_i}$ tends (weak$^*$) to $\mu$.

Consequently, the assumptions presented in \S{}\ref{stouchdown} and \S{}\ref{stouchdown2} can perhaps be relaxed.

A simple way of satisfying the amenability assumption is to assume that the monoid is contained in the monoid given by some rational rotations (e.g. coming from Assumptions \ref{ass-rot} and \ref{ass-rot2}) as well as some map (which may not be a rational rotation). Since the monoid is then a finite extension of an amenable one (the single map which is not a rational yields a monoid isomorphic to $\mathbb{N}$), it is amenable as well.

The author would like to thank Yves de Cornulier \cite{Cor} for pointing out that \cite{Nav} essentially shows there are not other examples where this monoid is amenable (beside the given example where two maps are rational).

\end{document}